\newcommand\bes{\begin{eqnarray}}
\newcommand\ees{\end{eqnarray}}
\newtheorem{theorem}{Theorem}[section]
\newtheorem{lemma}[theorem]{Lemma}
\newtheorem{definition}[theorem]{Definition}
\newtheorem{remark}[theorem]{Remark}
\newtheorem{proposition}[theorem]{Proposition}
\numberwithin{equation}{section}
\begin{document}
\title[The Uniform Spreading Speed for Non-uniform Initial Data]{
\textbf{The Uniform Spreading Speed in Cooperative Systems  with   Non-uniform Initial Data}}

\author[Ru Hou, Zhian Wang, Wen-Bing Xu, Zhitao Zhang]{Ru Hou$^{1}$, Zhian Wang$^{2}$, Wen-Bing Xu$^{3,4,*}$, Zhitao Zhang$^{4,5}$}
\thanks{\hspace{-.53cm}
$^1$ School of Mathematical Sciences, Peking University, Beijing 100871, P. R. China.
\\
$^2$ Department of Applied Mathematics, The Hong Kong Polytechnic University, Hung Hom, Kowloon, Hong Kong.
\\
$^3$ School of Mathematical Sciences, Capital Normal University, Beijing 100048, PR China.\\
$^4$ Academy of Mathematics and Systems Science, Chinese Academy of Sciences, Beijing 100190, P. R. China.
\\
$^5$ School of Mathematical Sciences, University of Chinese Academy of Sciences, Beijing 100049, P. R. China.
\\
$^*${\sf Corresponding author} (6919@cnu.edu.cn)}

\date{\today}
\begin{abstract}
This paper considers  the spreading speed of cooperative nonlocal dispersal system with irreducible reaction functions and non-uniform initial data. Here the non-uniformity means that all components of initial data decay exponentially but their decay  rates are different.
It is well-known that in a monostable reaction-diffusion or nonlocal dispersal equation, different decay  rates of initial data yield  different  spreading speeds. In this paper, we show that due to the cooperation and irreducibility of reaction functions, all components of the solution with non-uniform initial data will possess a uniform spreading speed which decreasingly depends only on the smallest decay  rate of initial data.
The decreasing  property of the uniform spreading speed in the smallest decay  rate further implies that the component with  the smallest decay rate  can accelerate the spatial propagation of other components. In addition, all the methods in this paper can be carried over to the cooperative system  with classical diffusion (i.e. random diffusion).

\textbf{Keywords}: Nonlocal dispersals, spreading speeds, cooperative systems, exponential decay.

\textbf{Mathematics Subject Classification numbers}: 35C07, 35K57, 92D25.
\end{abstract}
\maketitle

\section{Introduction}
The long-range dispersal,  such as the spread of infectious disease across countries and continents by the travel of infected humans  \cite{HF2014}, has increasingly become an important phenomenon nowadays,
and it  has attracted extensive attention of researchers (see \cite{CMM2006,MMC2011,Mur1993}).
Mathematically the long-range dispersal can be  modelled by a nonlocal dispersal operator that describes  the  movements   between not only  adjacent   but also nonadjacent spatial locations. A typical nonlocal dispersal equation  with reaction is given by
\begin{equation}\label{1.3}
u_t=k*u-u+f(u),\quad t>0,~x\in\mathbb R,
\end{equation}
where $u(t,x)$ stands for the population density at location $x$ and time $t$, $f(u)$ is a reaction function, and the nonlocal dispersal operator is represented by
\[
k*u(t,x)-u(t,x)=\int_{\mathbb R}k(x-y)u(t,y)dy-u(t,x).
\]
Here $k: \mathbb{R} \to \mathbb{R}$ is a nonnegative and continuous function with $\int_{\mathbb{R}}k(x)dx=1$.
As stated in \cite{Fife2003},  $k(x-y)$ can be viewed as  the  probability for   individuals to move from  location $y$ to location $x$,  $k*u(t,x)=\int_{\mathbb R}k(x-y)u(t,y)dy$ stands for the rate at which individuals arrive at location $x$ from other locations,
and $-u(t,x)=-\int_{\mathbb R}k(y-x)u(t,x)dy$ is the rate at which individuals  leave location $x$ and move to other locations.
One of the most significant  research topics in the literature for \eqref{1.3} is the wave propagation phenomena which are associated with the studies of traveling wave solutions, entire solutions and spreading speeds. These results can be used to describe the spreading process of populations, such as the spatial spread of infectious diseases and the invasion of species.  For the traveling wave solutions   of \eqref{1.3}, we refer to the classical works by Bates et al. \cite{BFRW},  Carr and Chmaj \cite{CC2004}, Chen \cite{Chen1997}, Chen and Guo \cite{CG2003},  Coville, D\'{a}vila and Mart\'{\i}nez \cite{CDM2008},   Schumacher \cite{Sch1980}, Yagisita \cite{Yag2009}, etc. For the entire solutions of \eqref{1.3}, we refer to, for example, Li, Sun and Wang \cite{LSW2010}.
For the spreading speeds of \eqref{1.3}, we refer to the works by Lutscher, Pachepsky and Lewis \cite{LPL2005},  Shen and Zhang \cite{SZ2010}, Zhang, Li and Wang \cite{ZLW2012}, Rawal, Shen and Zhang\cite{RSZ2015}, Finkelshtein, Kondratiev and  Tkachov \cite{FKT2015,FKT2018},   Liang and Zhou \cite{LZ2020}, etc.

In this paper, we are concerned with  the spreading speed  of the following $m$-component nonlocal dispersal  system
\begin{equation}\label{1.1}
\left\{
\begin{aligned}
&U_t=D(K*U-U)+F(U),&&t>0,~x\in\mathbb R,\\
&U(0,x)=U_0(x)=(u_{1,0}(x),\ldots u_{m,0}(x)),&&x\in\mathbb R,
\end{aligned}
\right.
\end{equation}
where  $U=(u_1,\ldots,u_m)$,  $K=(k_1,\ldots,k_m)$,  $F=(f_1,\ldots,f_m)$, $D=\text{diag}\{d_1,\ldots,d_m\}$ with $d_j>0$, and $2\leqslant m\in \mathbb Z^+$.
The nonlocal dispersal is represented by
\[
K*U(t,x)-U(t,x)\triangleq(k_1*u_1(t,x)-u_1(t,x),\ldots,k_m*u_m(t,x)-u_m(t,x)).
\]
We assume that $F(U)$ is cooperative (namely  $\frac{\partial}{\partial u_i} f_j(U)\geqslant0 $ for any  $j\neq i$) and  monostable with an unstable equilibrium  $U\equiv\mathbf 0\in\mathbb R^m$ and a stable equilibrium   $U\equiv P\in(\mathbb R^+)^m$.
Assume that
\[
U_0(\cdot)\not\equiv\mathbf 0,~\mathbf 0\leqslant U_0(x)\leqslant P~~\text{for all}~x\in\mathbb R.
\]
The kernel $K\in C(\mathbb R,\mathbb R^m)$ is symmetric on $\mathbb R$ and satisfies   the Mollison condition (see \cite{CDM2008,Mur1993,Mollison}), in the sense that, there exists  $\Lambda>0$  such that
\[
\int_\mathbb{R}k_j(x)e^{\Lambda|x|}dx<+\infty,~j\in\{1,\ldots,m\}.
\]
The local dispersal system, as a counterpart of \eqref{1.1}, is called the reaction-diffusion system which reads as
\begin{equation}\label{1.5}
\left\{
\begin{aligned}
&U_t=D\Delta U+F(U),~&&t>0,~x\in\mathbb R,\\
&U(0,x)=U_0(x),~&&x\in\mathbb R.
\end{aligned}
\right.
\end{equation}
When $m=2$, traveling wave solutions and entire solutions were obtained for \eqref{1.1} by Li, Xu and Zhang \cite{LXZ2017}, Meng, Yu and Hsu \cite{MYH2019}, and for \eqref{1.5} by Hsu and Yang \cite{HY2013}, Zhao and Wang \cite{ZW2004}, Xu and Zhao \cite{XZ2005}, Wu and Hsu \cite{WH2016}.
When the initial data $U_0$ are compactly supported (or equivalently $U_0(x)\equiv\mathbf 0$ for large $x>0$), there are numerous results on the spreading spread of \eqref{1.1} and \eqref{1.5}. For the nonlocal dispersal system \eqref{1.1}, we refer to Bao et al \cite{BLSW2018}, Bao, Shen and Shen \cite{BSS2019}, Hu et al. \cite{HKLL2015}.
For the local dispersal system \eqref{1.5} and its discrete-time counterpart,
we refer to Kolmogorov, Petrovsky and Piskunov \cite{KPP1937} and Aronson~and~Weinberger~\cite{AW1975,AW1978} for the case $m=1$ (i.e. classical reaction-diffusion equation), and Weinberger \cite{Wei1982}, Lui \cite{Lui1989}, Weinberger, Lewis and Li \cite{WLL2002}, Li, Weinberger and Lewis \cite{LWL2005}, Liang and Zhao \cite{LZ2008,LZ2010}, Fang and Zhao \cite{FZ2014}, and Wang \cite{Wang2011} for the case $m\geqslant2$.

Note that the aforementioned existing results on the spreading speeds of \eqref{1.1} and \eqref{1.5}   essentially assume that the initial data $U_0(x)$ are compactly supported. However, when the initial data $U_0(x)$ are not compactly supported, the results of spreading speed are much fewer. Especially, when the initial value function decays   exponentially, namely
\begin{equation}
u(0,x)\sim C e^{-\sigma|x|}~\text{as}~|x|\rightarrow+\infty~\text{with}~\sigma>0,~ C>0,
\end{equation}
the system \eqref{1.1} with $m=1$, namely \eqref{1.3}, has a spreading speed
\begin{equation}\label{1.4}
s(\sigma)=\frac{1}{\sigma}\bigg\{\int_{\mathbb R}k(x)e^{\sigma x}dx-1+f'(0)\bigg\}~~\text{for}~\sigma\in(0,\sigma^*),
\end{equation}
where $\sigma^*=\min\{\sigma>0~|~s(\sigma)=\min\{s(\sigma);\sigma>0\}\}$, see e.g. \cite{SLW2011,Coville,XLR2020-2}. Similar  results  for   \eqref{1.5} with $m=1$  (i.e. reaction-diffusion equation) and exponentially decaying initial data were previously obtained by Booty,  Haberman and  Minzon \cite{BHM1993}, Hamel and Nadin \cite{HN2012}, McKean \cite{McK1975}, and Sattinger \cite{Sat1976}, etc.
When $m=2$, a recent work  by Xu, Li and Ruan \cite{XLR2020}   studied  the spreading speed of \eqref{1.1}   for initial data $u_{1,0}(x)$ and $u_{2,0}(x)$ decaying  exponentially with the same decay rate.

The purpose of this paper is to study the spreading speed of \eqref{1.1} where $m\geqslant 2$ and all components of   initial data $U_0$ decay  exponentially but  their  decay rates may be different. That is
we assume that  each component of $U_0(x)$ has its own decay  rate, namely
\begin{equation}\label{1.2}
u_{j,0}(x)\sim C_j e^{-\lambda_j|x|}~\text{as}~|x|\rightarrow+\infty~\text{with}~C_j>0~\text{for any}~ j\in J\triangleq\{1,\ldots,m\}.
\end{equation}
We call the initial data $U_0(x)$ are non-uniform if there exist some $i,j\in J$ with $i\ne j$ such that $\lambda_i\ne \lambda_j$.
The case of non-uniform initial data considered in this paper is essentially different from the case in \cite{XLR2020}  where $m=2$ and $\lambda_1=\lambda_2$.
From \eqref{1.4} and  other results mentioned above, we conclude    that the spreading speed of scalar dispersal equations essentially depends on the decay rate of exponentially decaying initial data. For the dispersal system, if all components of initial data $U_0$ have the same decay rate   (i.e. uniform initial data), the spreading speed can still be determined by this single decay rate as shown in \cite{XLR2020} for $m=2$. But now if the initial data are non-uniform, an immediate question is whether all  components of  \eqref{1.1} have the same spreading speed, and if so, which component will paly a prevailing role in determining this spreading speed.
To proceed, we give the  definition  of   uniform spreading speed of \eqref{1.1}.
\begin{definition}[Uniform spreading speed]
Given initial data  $U_0$ satisfying \eqref{1.2}, a positive constant $c_0$  is called the uniform spreading speed of the solution of  \eqref{1.1}, if for any  $j\in J$ and $\varepsilon\in(0,c_0)$, there is a constant $\nu>0$ such that
\[
\left\{
\begin{aligned}
&\lim\limits_{t\rightarrow+\infty}\sup_{|x|\geqslant (c_0+\varepsilon) t}u_j(t,x)=0,\\
&\liminf\limits_{t\rightarrow+\infty}\inf_{|x|\leqslant (c_0-\varepsilon)t}u_j(t,x)\geqslant\nu.
\end{aligned}
\right.
\]
\end{definition}

We will show that when the reaction function $F$ is cooperative and $F'(\mathbf 0)$ is irreducible, all components of the solution $U$ of \eqref{1.1} with non-uniform initial data (different decay rates) satisfying \eqref{1.2}  have a uniform spreading speed (the same spreading speed), see Theorem \ref{th1.3}.
Furthermore, this uniform spreading speed depends only on the smallest decay rate $\lambda_0\triangleq\min \{\lambda_j,j\in J\}$ and is decreasing with respect to $\lambda_0$, which implies  that the component with the smallest decay rate can accelerate the spatial propagation of other components of $U$ (see details in Section 2). We also refer to a recent work by Xu, Li, and Ruan \cite{XLR2017} where the acceleration  propagation of \eqref{1.5} was obtained for non-uniform non-exponentially decaying initial data, and other works by Coulon and Yangari \cite{CY2017}, Yangari \cite{Yan2016}, and  Xu, Li and Lin \cite{XLL2020} for the acceleration  propagation with non-uniform nonlocal dispersal kernels and  compactly supported initial data.

The rest of this paper is organized as follows. In  Section 2, we present  the main  assumptions and results.
In Section  3, we study a special case  where all  components  of initial data have the same decay rate $\lambda$, and prove that \eqref{1.1} has a uniform spreading speed dependent on $\lambda$. In Section 4, we focus on the general case that  the initial data satisfy  \eqref{1.2} and complete the proof of our main result.

\section{Main assumptions and results}
In this section, we give the main  assumptions and results.  Let us introduce  some  notations first. For $U=(u_1,\ldots,u_m)\in \mathbb R^m$, $V=(v_1,\ldots,v_m)\in \mathbb R^m$, we write $U\geqslant V$ if $u_j\geqslant v_j$ for any $j\in J$; $U\gg V$ if $u_j> v_j$ for any $j\in J$.
Denote
\[
[U,V]=\{\phi\in\mathbb R^m; U\leqslant\phi\leqslant V\}.
\]
Let $\|U\|=\sqrt{u_1^2+\ldots+u_m^2}$ denote the    norm of $\mathbb R^m$.
We write $\mathbf 0=(0,\ldots,0)\in \mathbb R^m$ and $\mathbf 1=(1,\ldots,1)\in \mathbb R^m$.
Assume that
\begin{itemize}
\item[\bf (A1)]
\begin{description}
       \item[(a)] there is a strictly positive equilibrium $P=(p_1,p_2,..., p_m)$  such that  $F(\mathbf{0})=F(P)=\mathbf{0}$ and $F\in C^1([\mathbf{0},P],\mathbb{R}^m)$;  there is no other equilibrium $\phi$ in $[\mathbf 0,P]$ such that $F(\phi)=\mathbf{0}$.
       \item[(b)] $F$ is cooperative in $[\mathbf{0},P]$, namely $\frac{\partial}{\partial u_i} f_j(U)\geqslant0 $ for any $U\in[\mathbf{0},P]$ and $j\neq i$.
       \item[(c)]$F'(\mathbf{0})$ is an irreducible  matrix  satisfying
           \[\max\{\text{Re}~\lambda | \det(\lambda I-F'(\mathbf{0}))=0\}>0.\]
       \item[(d)] for any $j\in J$, the function  $k_j$ is nonnegative, continuous, symmetric on $\mathbb R$, and decreasing on $\mathbb R^+$. Moreover,   $\int_\mathbb{R}k_j(x)dx=1$ and there exists  $\Lambda>0$  such that  \begin{equation}\label{1.99}
           \int_\mathbb{R}k_j(x)e^{\Lambda|x|}dx<+\infty.
           \end{equation}
\end{description}
\end{itemize}
Note that  \eqref{1.1} is monostable on $[\mathbf 0, P]$ under  (A1)(a) and (c); namely, the equilibrium  $U\equiv\mathbf 0$ is unstable and $U\equiv P$ is  stable.
From (A1)(b), the matrix $F'(\mathbf 0)$ is essentially nonnegative.
Note that a matrix $A=(a_{ij})_{m\times m}$ is called essentially nonnegative if all   coefficients of the matrix $(A-\min_{i\in\{1,\ldots,m\}}\{a_{ii}\}\mathbf I_m)$ are nonnegative.

We define
\[
\overline \Lambda=\sup\left\{\lambda>0~\Big|~\int_\mathbb R k_j(x)e^{\lambda x}dx<+\infty~\text{for all}~j\in\{1,\ldots,m\}\right\}\in\mathbb (0,+\infty)\cup \{+\infty\}.
\]
For $\lambda\in(0,\overline\Lambda)$, let $\mathcal K(\lambda)$ denote the $m\times m$  matrix as follows
\[
\mathcal K(\lambda)= D\cdot\text{diag}\left\{\int_{\mathbb R}k_1(y)e^{\lambda y}dy,\ldots,\int_{\mathbb R}k_m(y)e^{\lambda y}dy\right\}-D+F'(\mathbf 0).
\]
Since  $F'(\mathbf0)$ is irreducible, so is $\mathcal K(\lambda)$. By the Perron-Frobenius theorem (see \cite{LT1985}), $\mathcal K(\lambda)$ has an eigenvalue $\gamma(\lambda)$ with algebraic multiplicity one, and we denote  by $V(\lambda)$ the positive unit eigenvector  corresponding to  $\gamma(\lambda)$, namely $\mathcal K(\lambda) V(\lambda)=\gamma(\lambda)V(\lambda)$ and
\begin{equation}\label{1.98}
 V(\lambda)\gg\mathbf 0\quad\text{for} ~\lambda\in(0,\overline\Lambda).
\end{equation}
From the  symmetry   of $k_j$, it follows that $\int_{\mathbb R}k_j(y)e^{\lambda y}dy\geqslant1$ for any $\lambda\in(0,\overline \Lambda)$.
Then (A1)-(c)  implies that $\gamma(\lambda)>0$.
For $\lambda\in(0,\overline\Lambda)$,   denote
\begin{equation}\label{1.21}
c(\lambda)=\gamma(\lambda)/\lambda>0.
\end{equation}
Obviously, $c(\lambda)$ is continuous on $(0,\overline\Lambda)$ and
\begin{equation}\label{2.2}
c(\lambda)\lambda V(\lambda)-\mathcal K(\lambda)V(\lambda)=\mathbf 0~\text{for any}~\lambda\in(0,\overline\Lambda).
\end{equation}
Define
\begin{equation}\label{1.22}
c^*\triangleq\inf_{\lambda\in(0,\overline\Lambda)}\{c(\lambda)\}<+\infty.
\end{equation}
It was shown in \cite[Lemma 2.4]{HKLL2015} that $\lambda^*<+\infty$, where $\lambda^*$ is the smallest   positive number at which the above infimum is attained, namely
\[
c^*=c(\lambda^*)=\gamma(\lambda^*)/\lambda^*>0.
\]

\begin{remark}\label{re1.2}\rm
The function  $c(\cdot)$   defined by \eqref{1.21} is strictly decreasing on $(0,\lambda^*)$.
Indeed, by  Lemma 6.5 and (6.5)   in Lui \cite{Lui1989},  $c(\lambda)$ is twice continuously differentiable and decreasing (i.e. $c'(\lambda)\leqslant0$) on $(0,\lambda^*)$, and it  satisfies
\[
(\lambda^2c')'=2\lambda c'+\lambda^2 c''\geqslant0.
\]
Then   $c''(\lambda)\geqslant0$ for $\lambda\in(0,\lambda^*)$.
Suppose that $c(\lambda)$ is decreasing but not strictly decreasing on $(0,\lambda^*)$. Then there exists $\mu\in(0,\lambda^*)$ such that $c'(\mu)=0$.
From $c'(\lambda)\leqslant 0$ and $c''(\lambda)\geqslant0$ on $(0,\lambda^*)$,  we get that $c'(\lambda)=0$ for any $\lambda\in[\mu,\lambda^*)$, which implies by the continuity of $c(\lambda)$ that $c(\lambda)=c(\lambda^*)$ for $\lambda\in[\mu,\lambda^*]$.
On the other hand,
recall  that $\lambda^*$ is the smallest   positive number at which $\inf_{\lambda>0}\{c(\lambda)\}$ is attained, which means $c(\lambda)>c(\lambda^*)$ for $\lambda\in(0,\lambda^*)$. It is a  contradiction.
\end{remark}

There are  some additional  assumptions on $F$.
\begin{description}
\item[(A2)] for $\lambda\in(0,\lambda^*]$, $F(\min\{ P,q V(\lambda)\})\leqslant q F'(\mathbf0) V(\lambda)$ for any $q>0$.
\item[(A3)]  there are positive numbers $q_0$, $\delta_0$, and $M$ such that
    \[
    F(U)\geqslant F'(\mathbf{0})U-MU^{1+\delta_0}~\text{for any}~U\in[\mathbf 0, P]~\text{with}~\|U\|\leqslant q_0,
    \]
    where $U^{1+\delta_0}=(u_1^{1+\delta_0},\ldots,u_m^{1+\delta_0})\in\mathbb R^m$.
\end{description}
The assumptions (A2) and (A3) correspond to the Fisher-KPP assumption  in the scalar case, that is the assumption  $f'(0)u-M u^{1+\delta_0} \leqslant f(u)\leqslant f'(0)u$.
The assumption (A3) can be easily  satisfied, for example,  when  $F\in C^{1+\delta_0}[\mathbf 0,q_0\mathbf 1]$.
As stated in \cite{HKLL2015}, under    (A1)-(A3), $c^*$ is the spreading speed of \eqref{1.1} with  compactly supported initial data.
Denote
\[
\lambda_0\triangleq\min\{\lambda_j~|~j\in J\}.
\]
The following theorem about the  uniform spreading speed for non-uniform initial data is the main result of this paper.

\begin{theorem}\label{th1.3}
Assume (A1), (A2), and (A3) hold.
For  the non-uniform initial data $U_0(x)$  satisfying  \eqref{1.2} with $\lambda_0\in(0,\lambda^*)$, the solution of \eqref{1.1} has a uniform spreading speed $c(\lambda_0)$, which is independent of the decay rate $\lambda_j$ satisfying  $\lambda_j>\lambda_0$. Moreover, $c(\lambda_0)$ is strictly decreasing with respect to $\lambda_0\in(0,\lambda^*)$.
\end{theorem}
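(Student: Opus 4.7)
My plan is to sandwich the solution $U(t,x)$ of \eqref{1.1} between a supersolution and a subsolution, both propagating at the single speed $c(\lambda_0)$ associated with the slowest decay rate. The upper bound is a direct supersolution construction with exponential rate $\lambda_0$; this works because every component satisfies $u_{j,0}(x)\leqslant Ce^{-\lambda_0|x|}$ whenever $\lambda_j\geqslant\lambda_0$. The lower bound is more delicate: the components whose initial data decay strictly faster than $e^{-\lambda_0|x|}$ cannot be bounded below by a slowly decaying function at $t=0$, so I would exploit the irreducibility of $F'(\mathbf 0)$ together with cooperativity to show that after an arbitrarily short waiting time every component of $U$ acquires a lower bound of order $e^{-\lambda_0|x|}$, which reduces the problem to the uniform-decay setting already handled in Section 3. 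The strict monotonicity of $c(\lambda_0)$ on $(0,\lambda^*)$ is then read off directly from Remark \ref{re1.2}.

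\textbf{Upper bound.} Since $\lambda_j\geqslant\lambda_0$ for every $j\in J$, the asymptotic \eqref{1.2} yields a constant $C>0$ with $u_{j,0}(x)\leqslant Ce^{-\lambda_0|x|}$ on $\mathbb R$ for every $j$. I would then define
\[
\bar U(t,x)=\min\bigl\{P,\ q\,V(\lambda_0)\,e^{\lambda_0(c(\lambda_0)t-|x|)}\bigr\},
\]
with $q>0$ large enough that $\bar U(0,\cdot)\geqslant U_0$. Identity \eqref{2.2}, assumption (A2), and the facts that $F$ is cooperative on $[\mathbf 0,P]$ and $F(P)=\mathbf 0$ imply that $\bar U$ is a supersolution of \eqref{1.1}; the verification is the same as in the uniform-decay case treated in Section 3. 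The comparison principle then gives $U\leqslant\bar U$, and for every $\varepsilon>0$ the exponential factor forces $U(t,x)\to\mathbf 0$ uniformly on $|x|\geqslant(c(\lambda_0)+\varepsilon)t$.

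\textbf{Lower bound.} Fix $i_0\in J$ with $\lambda_{i_0}=\lambda_0$. The key intermediate claim is that for some (hence every) $\tau>0$ there exist $\eta>0$ and $R>0$ such that $u_j(\tau,x)\geqslant \eta\,e^{-\lambda_0|x|}$ for all $|x|\geqslant R$ and all $j\in J$. For $j=i_0$ this is straightforward: dropping the nonnegative convolution term in \eqref{1.1} gives $u_{i_0}(\tau,x)\geqslant e^{-L\tau}u_{i_0,0}(x)$ for a suitable $L>0$, whose tail matches \eqref{1.2}. For $j\neq i_0$, the irreducibility of $F'(\mathbf 0)$ furnishes a chain $i_0=j_0\to j_1\to\cdots\to j_\ell=j$ with $\tfrac{\partial f_{j_r}}{\partial u_{j_{r-1}}}(\mathbf 0)>0$; iterating the Duhamel representation along this chain, retaining at each stage only the nonnegative cross-coupling term that cooperativity guarantees, produces after at most $m-1$ steps a positive lower bound on $u_j(\tau,\cdot)$ of order $e^{-\lambda_0|x|}$, the kernel tail condition (A1)(d) ensuring that each intervening convolution by $k_{j_r}$ preserves exponential rate $\lambda_0$. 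With the claim in hand, $U(\tau,\cdot)$ dominates a continuous initial datum $\widetilde U_0$ whose components all decay at the uniform rate $\lambda_0$, so the Section 3 theorem applied to the solution starting from $\widetilde U_0$ at time $\tau$, combined with comparison, transfers the lower spreading bound $c(\lambda_0)$ to every component of $U$. Strict monotonicity of $c(\lambda_0)$ in $\lambda_0\in(0,\lambda^*)$ is then immediate from Remark \ref{re1.2}.

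\textbf{Main obstacle.} The hardest point is precisely the propagation-of-slow-decay claim in the lower bound, which is where the joint use of cooperativity and irreducibility of $F'(\mathbf 0)$ is essential; once it is available, the rest of the theorem is a combination of comparison arguments with the uniform-decay analysis already carried out in Section 3.
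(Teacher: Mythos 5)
Your proposal is correct and takes essentially the same route as the paper: construct a supersolution at rate $\lambda_0$ for the upper bound, then propagate the slow decay $e^{-\lambda_0|x|}$ from the slowest component to all others along the chain supplied by the irreducibility of $F'(\mathbf 0)$ via cooperative cross-coupling, so as to reduce the general case to Proposition~\ref{th2.1}, and read off the strict monotonicity of $c(\cdot)$ from Remark~\ref{re1.2}. The only substantive difference is in how the slow-decay lower bound is passed along the chain: the paper builds an explicit staggered lower solution $W(t,x)$ (each new component is switched on after an extra delay $\tau$ and is fed by an earlier one through a coupling $f_{i,j}>0$), kept inside $[\mathbf 0,q_3\mathbf 1]$ so that the linearization inequality \eqref{4.2} applies, and comparison then gives $U(T,\cdot)\geqslant W(T,\cdot)\geqslant M_0 e^{-\lambda_0|\cdot|}$ on all of $\mathbb R$ at once; your Duhamel iteration is a different packaging of the same estimate, carrying the extra bookkeeping of restricting to $|x|\geqslant R$ (where the actual solution $U$ is small enough for the linearization to apply) and filling in the compact region separately by strict positivity of $U(\tau,\cdot)$, which the paper obtains from the iterated-kernel step \eqref{3.99}. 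Either packaging works and they rest on the same two ideas — the quantitative linearization of $F$ near $\mathbf 0$ and the finite chain from irreducibility — so I regard this as the paper's argument in a slightly different guise.
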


From Theorem \ref{th1.3}, the cooperation and irreducibility of reaction functions can ensure that all components of the solution of \eqref{1.1} with non-uniform initial data have a  uniform spreading speed.
In fact, if $F\in C^1[\mathbf 0,P]$ and $\frac{\partial}{\partial u_i}f_j(\mathbf 0)>0$ with $i\neq j$, then as seen from the $j$th equation of \eqref{1.1}, namely
\[
\frac{\partial}{\partial t}u_j=d_j(k_j*u_j-u_j)+f_j(U),
\]
the component $u_i$ of $U$ has a direct positive effect on the growth of the component $u_j$, when  $u_j$ is small enough.
We say $u_i$ has an indirect positive effect on the growth of  $u_j$, if $u_i$ does not directly affect the growth of $u_j$ (i.e. $\frac{\partial}{\partial u_i}f_j(\mathbf 0)=0$), but through   other components of $U$,  in the sense that there exists a set $\{j_1,j_2,\ldots,j_k\}$ with $j_1=i$ and $j_k=j$ such that $\frac{\partial}{\partial u_{j_{p-1}}}f_{j_p}(\mathbf 0)>0$ for any $p=2,\ldots,k$.
The irreducibility of $F'(\mathbf 0)=(\frac{\partial}{\partial u_i}f_j(\mathbf 0))_{m\times m}$ means  that  a direct or indirect positive effect exists between any two components of $U$, and hence, all   components  of the solution  with non-uniform initial data can have a uniform spreading speed.

Theorem \ref{th1.3} shows that the uniform spreading speed  depends only on the smallest decay rate $\lambda_0$. This conclusion, along with the fact that the spreading speed $c(\lambda_0)$ is strictly decreasing on $(0,\lambda^*)$ in Remark \ref{re1.2}, means that  the component with the smallest decay rate can accelerate the spatial propagation of other components. To understand this, we assume
the $j_0$th component of initial data $U_0$ has the smallest decay rate, namely $\lambda_{j_0}=\lambda_0\in(0,\lambda^*)$.
Let the decay rate of the $j_0$th component $u_{j_0,0}$   become smaller and  fix  the decay rates of other components of initial data $U_0$.
We denote the new decay rate of $u_{j_0,0}$   by $\lambda'\in(0,\lambda_0)$.
Then the uniform spreading speed   becomes $c(\lambda')$ from $c(\lambda_0)$.
Since $c(\cdot)$  is strictly decreasing on $(0,\lambda^*)$, we have that $c(\lambda')>c(\lambda_0)$, which means that the decrease of the smallest decay rate in the initial data can increase  the spreading speed  of other components of the solution.

Our idea  to prove Theorem \ref{th1.3} consists of two steps. First,  we focus  on  the special case that all components of initial data $U_0$  have the same decay  rate $\lambda\in (0,\lambda^*)$ (namely $\lambda_j=\lambda$ for any $j\in J$) and prove that the solution  has a uniform spreading speed $c(\lambda)$ in Section 3.
Second,  the general case that  $U_0$ satisfies \eqref{1.2} with $\lambda_0\in(0,\lambda^*)$ is considered in Section 4. By constructing  a lower solution, we show that after a period of time $T>0$, all   components of  $U(T,\cdot)$
are larger than an exponentially decaying  function with the decay rate $\lambda_0$.
This case is then transformed into the special case considered in Section 3 as long as   $u_j(T,x)$ is set as the new initial data.

Moreover,  from Theorem \ref{th1.3} and its proof in Section 4, the components whose  decay rates are not  $\lambda_0$ affect neither the result of uniform spreading speed nor its proof method.
Therefore, Theorem \ref{th1.3} also holds  if \eqref{1.2} is changed by the following assumption
\begin{description}
\item[(H)] there exist  $j_0\in\{1,2,\ldots,m\}$  and $\lambda_0>0$ such that
    \[
    u_{j_0,0}(x)\sim Ce^{-\lambda_0|x|},~ u_{j,0}(x)\leqslant e^{-\lambda_0|x|}~\text{for}~j\neq j_0~\text{and}~|x|~\text{large enough}.
    \]
\end{description}
In this assumption,  the component $u_{j,0}$ of $U_0$ with $j\neq j_0$  is not restricted to exponentially decaying functions, but any function
that is smaller than $e^{-\lambda_0|x|}$ when $|x|$ is large enough.

\begin{remark}\rm The   methods in this paper are also applicable to the reaction-diffusion cooperative system \eqref{1.5}. Therefore, no matter whether we consider a nonlocal or local dispersal system, the cooperation and irreducibility of $F$ can ensure that the solution  has a uniform  spreading speed  and   the component of $U$ with the smallest decay rate can accelerate the spatial propagation of other components.
\end{remark}

\section{Case of the same decay  rate}
In this section, we consider the case that  all components of initial data have the same decay rate  $\lambda\in(0,\lambda^*)$. First, we state two important lemmas that are proved in \cite[Theorem 4.1]{XLR2020} (for Lemma \ref{th1.1}) and \cite[Theorem 4.5]{HKLL2015} (for Lemma \ref{le-cp}).

\begin{lemma}\label{th1.1} (Symmetry and  monotone  property)
If the functions $k_j(\cdot)$ and $u_{j,0}(\cdot)$ are symmetric on $\mathbb R$ and decreasing on $\mathbb R^+$ for any $j\in J$, so is $u_j(t,\cdot)$ for any $t>0$ and $j\in J$.
\end{lemma}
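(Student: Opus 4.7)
The plan is to establish symmetry first by a uniqueness argument for the Cauchy problem, and then derive the decreasing property on $\mathbb R^+$ by a reflection-and-comparison argument that uses the symmetry, the cooperativity of $F$, and the radial monotonicity of the kernels.

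For symmetry, I would set $\tilde U(t,x)\triangleq U(t,-x)$. Changing variables $y\mapsto -y$ in each convolution and using $k_j(-y)=k_j(y)$ shows that $\tilde U$ solves the same system \eqref{1.1}; the initial datum is unchanged because each $u_{j,0}$ is symmetric. Bounded solutions of \eqref{1.1} valued in $[\mathbf 0,P]$ are unique (a standard Banach fixed-point argument in $C([0,T];L^\infty(\mathbb R,\mathbb R^m))$, using that $F$ is Lipschitz on the compact box $[\mathbf 0,P]$), so $\tilde U\equiv U$, which is precisely the symmetry $u_j(t,-x)=u_j(t,x)$.

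For monotonicity, fix an arbitrary $x_0\geqslant 0$ and set $W_j(t,x)\triangleq u_j(t,2x_0-x)-u_j(t,x)$. The same change of variables shows that $(t,x)\mapsto u_j(t,2x_0-x)$ again solves the system, so after subtraction
\begin{equation*}
\partial_t W_j = d_j\bigl(k_j*W_j - W_j\bigr) + f_j\bigl(U(t,2x_0-x)\bigr) - f_j\bigl(U(t,x)\bigr),\qquad j\in J.
\end{equation*}
By construction $W_j(t,2x_0-x)=-W_j(t,x)$, so splitting $k_j*W_j$ at $y=x_0$ and applying $y\mapsto 2x_0-y$ on the left piece gives
\begin{equation*}
(k_j*W_j)(t,x) = \int_{x_0}^{\infty}\bigl[k_j(x-y) - k_j(x+y-2x_0)\bigr]\,W_j(t,y)\,dy.
\end{equation*}
For $x,y\geqslant x_0$, the inequality $|x-y|\leqslant|x+y-2x_0|$ is immediate from $(a+b)^2\geqslant (a-b)^2$ with $a=x-x_0$, $b=y-x_0$, so the bracketed kernel is $\geqslant 0$ by the radial monotonicity of $k_j$. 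The same radial monotonicity applied to $u_{j,0}$ gives $W_j(0,\cdot)\geqslant 0$ on $[x_0,\infty)$. Writing $f_j(\tilde U)-f_j(U)=\sum_i b_{ji}(t,x)W_i(t,x)$ with $b_{ji}\geqslant 0$ for $i\neq j$ (mean value theorem plus cooperativity), a standard comparison principle for cooperative nonlocal systems on $[x_0,\infty)$ yields $W_j(t,x)\geqslant 0$ there for all $t>0$. Since $x_0\geqslant 0$ was arbitrary, combining with the symmetry from the previous step gives that $u_j(t,\cdot)$ is radially decreasing, hence decreasing on $\mathbb R^+$.

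The main obstacle is the comparison step: the convolution $k_j*W_j$ reads values of $W_j$ on the whole line, so a naive half-line maximum principle fails. The device that unlocks the argument is the antisymmetry $W_j(t,2x_0-x)=-W_j(t,x)$, which together with the radial monotonicity of $k_j$ (this is exactly where the hypothesis ``$k_j$ decreasing on $\mathbb R^+$'' is used beyond mere symmetry) reduces the full-line convolution to a half-line integral with a nonnegative kernel. Once this reduction is in place, the cooperativity of $F$ handles the coupling across components and a Gronwall-type comparison on $[x_0,\infty)$ closes the argument.
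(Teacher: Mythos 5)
Your proof is correct and complete. The paper does not reprove this lemma (it cites Theorem 4.1 of Xu, Li and Ruan \cite{XLR2020} for it); the reflection argument you give --- the change of variables $x\mapsto -x$ plus uniqueness for the symmetry part, then the fold $x\mapsto 2x_0-x$ combined with the antisymmetry $W_j(t,2x_0-x)=-W_j(t,x)$ to rewrite $k_j*W_j$ as a half-line integral against the nonnegative kernel $k_j(x-y)-k_j(x+y-2x_0)$, followed by cooperativity and a Gronwall-type comparison on $[x_0,\infty)$ --- is the standard route to such symmetry-and-monotonicity-preservation statements and is, in essence, the same argument as in the cited reference.
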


\begin{lemma}\label{le-cp}
(Comparison principle)
Assume that $\bar U$ is an upper solution   and $\underbar U$ is a lower solution of  \eqref{1.1}; namely $\frac{\partial }{\partial t}\bar U(t,x)$ and $\frac{\partial }{\partial t}\underbar U(t,x)$ exist  and
\[
\begin{aligned}
&\frac{\partial}{\partial t}\bar U- DK*\bar U+D\bar U-F(\bar U)\geqslant\mathbf0~\text{for}~t>0,~x\in\mathbb R,\\
&\frac{\partial}{\partial t}\underbar U- DK*\underbar U+D\underbar U-F(\underbar U)\leqslant\mathbf0~\text{for}~t> 0,~x\in\mathbb R.
\end{aligned}
\]
If $\bar U(0,x)\geqslant \underbar U(0,x)$ for $x\in\mathbb R$, then $\bar U(t,x)\geqslant\underbar U(t,x)$ for any $t\geqslant0$ and $x\in\mathbb R$.
\end{lemma}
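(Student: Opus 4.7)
The plan is to show that $W := \bar U - \underbar U$ is nonnegative by first linearizing the difference $F(\bar U)-F(\underbar U)$ against $W$ (so that cooperativity becomes the non-negativity of off-diagonal entries of the linearization), and then running a maximum-principle-type argument on a positively perturbed version of $W$.

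First, I would observe that by the $C^1$ regularity of $F$ on $[\mathbf 0,P]$ and the cooperativity assumption (A1)(b), the integrated mean value theorem
\begin{equation*}
F(\bar U)-F(\underbar U) = A(t,x)\,W, \qquad A(t,x):=\int_0^1 F'\bigl(\underbar U(t,x)+sW(t,x)\bigr)\,ds,
\end{equation*}
produces a matrix $A(t,x)$ whose off-diagonal entries are nonnegative and whose entries are bounded on any time strip on which $\bar U,\underbar U$ take values in a fixed compact set (which is the relevant regime in the paper, where upper/lower solutions lie in $[\mathbf 0,P]$). Subtracting the two differential inequalities then gives
\begin{equation*}
W_t \;\geqslant\; D(K*W - W) + A(t,x)\,W,\qquad W(0,x)\geqslant \mathbf 0.
\end{equation*}

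Second, I would perturb by a strictly positive function to gain strict monotonicity. Fix $T>0$ and a constant $L>0$ larger than $\sup_{t\in[0,T],x\in\mathbb R,j}\sum_i|A_{j,i}(t,x)|$. For $\delta>0$, set $\tilde W(t,x):=W(t,x)+\delta e^{Lt}\mathbf 1$. Using the crucial identity $K*\mathbf 1=\mathbf 1$ (because each $k_j$ integrates to $1$) and the chosen size of $L$, a direct computation yields
\begin{equation*}
\tilde W_t - D(K*\tilde W-\tilde W) - A(t,x)\tilde W \;\geqslant\; \delta e^{Lt}\bigl(L\mathbf 1 - A(t,x)\mathbf 1\bigr) \;\gg\; \mathbf 0,
\end{equation*}
while $\tilde W(0,\cdot)\geqslant \delta\mathbf 1\gg\mathbf 0$. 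Thus $\tilde W$ satisfies a strict differential inequality of the same type.

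Third, I would run a first-contact argument. Let $t^\ast:=\sup\{\tau\in[0,T]:\tilde W(t,x)\gg\mathbf 0\text{ for all }(t,x)\in[0,\tau]\times\mathbb R\}$. If $t^\ast<T$, there is an index $j^\ast$ and a point $x^\ast$ (or a minimizing sequence $x_n$) at which $\tilde W_{j^\ast}(t^\ast,x^\ast)=0$ while $\tilde W_i(t^\ast,\cdot)\geqslant 0$ for every $i$. At such a point, $k_{j^\ast}*\tilde W_{j^\ast}(t^\ast,x^\ast)\geqslant 0$, the term $-d_{j^\ast}\tilde W_{j^\ast}(t^\ast,x^\ast)$ vanishes, and by cooperativity $\sum_{i\ne j^\ast}A_{j^\ast,i}\tilde W_i\geqslant 0$; adding the strictly positive forcing term $\delta e^{Lt^\ast}(L-(A\mathbf 1)_{j^\ast})>0$ forces $(\tilde W_{j^\ast})_t(t^\ast,x^\ast)>0$. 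This contradicts the fact that at a first zero $(\tilde W_{j^\ast})_t\leqslant 0$. Hence $\tilde W\gg\mathbf 0$ on $[0,T]\times\mathbb R$. Sending $\delta\downarrow 0$ gives $W\geqslant \mathbf 0$ on $[0,T]\times\mathbb R$, and since $T$ is arbitrary we obtain $\bar U\geqslant\underbar U$ for all $t\geqslant 0$.

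The main obstacle is the case where the infimum $\inf_x\tilde W_{j^\ast}(t^\ast,x)$ is not attained at a finite point (so that the pointwise contradiction in the previous step is not directly available). The standard remedy, which I would use, is to replace the shift $\delta e^{Lt}\mathbf 1$ by a coercive shift such as $\delta e^{Lt}\bigl(1+\varepsilon\cosh(\eta x)\bigr)\mathbf 1$ with $\eta\in(0,\Lambda)$ small enough that the Mollison condition (A1)(d) ensures $k_j*\cosh(\eta x)\leqslant C_\eta\cosh(\eta x)$; this coerciveness forces the infimum to be attained at a finite $x^\ast$, allowing the first-contact argument to go through, after which one first sends $\varepsilon\downarrow 0$ and then $\delta\downarrow 0$. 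The only other technical point is ensuring boundedness of $A(t,x)$, which follows from the hypothesis that $\bar U,\underbar U$ take values in the compact range where $F\in C^1$.
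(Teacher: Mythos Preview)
The paper does not supply its own proof of this lemma: it is simply quoted from \cite[Theorem~4.5]{HKLL2015}. Your proposal, by contrast, gives a self-contained argument, and it follows what is essentially the classical route for comparison principles in cooperative systems: linearize $F(\bar U)-F(\underbar U)=A(t,x)W$ via the integral mean value theorem so that cooperativity becomes nonnegativity of the off-diagonal entries of $A$, add a strictly positive exponential shift to gain a strict inequality, and run a first-touching argument. The coercive modification $\delta e^{Lt}(1+\varepsilon\cosh(\eta x))\mathbf 1$ to force the infimum to be attained at a finite point is the standard remedy on $\mathbb R$, and your use of the Mollison condition to control $k_j*\cosh(\eta\cdot)$ is correct.

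Two small caveats worth flagging. First, the lemma as stated does not assume $\bar U,\underbar U\in[\mathbf 0,P]$, whereas your boundedness of $A(t,x)$ relies on this; you are right that every upper/lower solution actually used in the paper lies in $[\mathbf 0,P]$, so this is harmless, but you should say explicitly that you are proving the lemma under that standing hypothesis. Second, the first-contact step tacitly assumes enough continuity in $(t,x)$ of $\tilde W$ for the set $\{\tau:\tilde W\gg\mathbf 0\text{ on }[0,\tau]\times\mathbb R\}$ to be open and for the derivative bound $(\tilde W_{j^\ast})_t\leqslant 0$ at the first zero to make sense; this is routine given the integral formulation of \eqref{1.1}, but it should be acknowledged.
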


The following result is a  special case of  Theorem \ref{th1.3} where all components of $U_0$ have the same decay rate  $\lambda\in(0,\lambda^*)$.
\begin{proposition}\label{th2.1}
Assume (A1), (A2), and (A3) hold.
Let $U_0(x)$ satisfy \eqref{1.2} with  $\lambda_j=\lambda\in(0,\lambda^*)$ for any $j\in J$.   Then the solution of  \eqref{1.1} has a uniform spreading speed $c(\lambda)$.
\end{proposition}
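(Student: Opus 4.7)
My plan is to establish the two inequalities in the definition of uniform spreading speed separately, exploiting the Perron--Frobenius structure of $\mathcal K(\lambda)$ throughout. The spectral identity \eqref{2.2} says that $e^{-\lambda(x-c(\lambda)t)}V(\lambda)$ is an exact solution of the linearized system $U_t=D(K\ast U-U)+F'(\mathbf 0)U$, and the eigenvector is componentwise positive by \eqref{1.98}; these two facts drive both bounds.

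For the upper bound I construct the supersolution
\[
\bar U(t,x)=\min\bigl\{P,\ A\,e^{-\lambda(|x|-c(\lambda)t)}V(\lambda)\bigr\}
\]
componentwise, with $A>0$ chosen so large that $\bar U(0,\cdot)\geqslant U_0(\cdot)$ pointwise; this is possible because $u_{j,0}(x)\sim C_je^{-\lambda|x|}$ and $V(\lambda)\gg\mathbf 0$. Where the minimum is the exponential term, assumption (A2) combined with \eqref{2.2} makes $\bar U$ a supersolution of the nonlinear system; where $\bar U=P$, one has $F(P)=\mathbf 0$ and $D(K\ast P-P)\leqslant\mathbf 0$. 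Lemma \ref{le-cp} then yields $U\leqslant\bar U$, and on $|x|\geqslant(c(\lambda)+\varepsilon)t$ the supersolution is at most $AV(\lambda)e^{-\lambda\varepsilon t}$, which tends to $\mathbf 0$ as $t\to+\infty$.

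For the lower bound I need $\nu>0$ with $\inf_{|x|\leqslant(c(\lambda)-\varepsilon)t}u_j(t,x)\geqslant\nu$. By Lemma \ref{th1.1}, each $u_j(t,\cdot)$ is symmetric and decreasing in $|x|$, so the infimum is attained at $|x|=(c(\lambda)-\varepsilon)t$, and a lower bound obtained at one small $\varepsilon_0$ automatically extends to every larger $\varepsilon$ by nesting of the level sets. I will build a subsolution of the form
\[
\underbar U(t,x)=\eta\Bigl[e^{-\lambda(|x|-c(\lambda)t)}V(\lambda)-b\,e^{-\lambda_1(|x|-c(\lambda_1)t)}V(\lambda_1)\Bigr]_+,
\]
where $\lambda_1\in(\lambda,\min\{\lambda^*,(1+\delta_0)\lambda\})$ is close to $\lambda$ and $\eta,b>0$ are small. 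Each of the two exponentials is a linear solution by \eqref{2.2}, and their difference is componentwise nonnegative exactly on $|x|>\rho t+O(1)$, with slope $\rho=(\gamma(\lambda_1)-\gamma(\lambda))/(\lambda_1-\lambda)=\gamma'(\xi)$ for some $\xi\in(\lambda,\lambda_1)$. Differentiating $\gamma(\lambda)=\lambda c(\lambda)$ and using $c'<0$ on $(0,\lambda^*)$ from Remark \ref{re1.2} gives $\gamma'(\xi)=c(\xi)+\xi c'(\xi)<c(\xi)\leqslant c(\lambda)$, so the positive region of $\underbar U$ contains the line $|x|=(c(\lambda)-\varepsilon)t$ once $\varepsilon$ is small enough. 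Assumption (A3) lets me control the nonlinear error: the remainder $F'(\mathbf 0)\underbar U-F(\underbar U)\leqslant M\underbar U^{1+\delta_0}$ decays in $|x|$ faster than the second mode because $(1+\delta_0)\lambda>\lambda_1$, and is absorbed by the slack produced by subtracting $b\,e^{-\lambda_1(|x|-c(\lambda_1)t)}V(\lambda_1)$ once $\eta$ is small. A spatial shift, enabled by the asymptotic \eqref{1.2} with $\lambda_j=\lambda$ for every $j$, arranges $\underbar U(0,\cdot)\leqslant U_0(\cdot)$; Lemma \ref{le-cp} then gives $U\geqslant\underbar U$, and evaluating at $|x|=(c(\lambda)-\varepsilon)t$ produces the desired positive liminf.

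The main obstacle is the subsolution verification. Three points need care: (i) that the $[\,\cdot\,]_+$ truncation preserves the subsolution inequality in the nonlocal setting, which relies on $K\ast\underbar U\geqslant\mathbf 0$ and $F(\mathbf 0)=\mathbf 0$ on the zero set together with continuity across its boundary; (ii) that the slack produced by the second mode dominates the nonlinear remainder uniformly on the positive set, which is where the constraint $\lambda_1<(1+\delta_0)\lambda$ and the smallness of $\eta$ are used; (iii) the initial-data domination $\underbar U(0,\cdot)\leqslant U_0(\cdot)$ requires a careful choice of $b$ together with a spatial shift. The vector-valued nature is handled uniformly by the positive Perron--Frobenius eigenvectors $V(\lambda),V(\lambda_1)\gg\mathbf 0$ and the irreducibility of $\mathcal K(\lambda)$, so a single choice of parameters works for every $j\in J$.
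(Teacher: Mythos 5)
Your upper-bound argument is essentially the paper's: the same truncated supersolution $\min\{P,A e^{-\lambda(|x|-c(\lambda)t)}V(\lambda)\}$, verified with (A2), \eqref{2.2}, and the monotonicity of the convolution, followed by Lemma \ref{le-cp}. That part is fine (modulo the cosmetic slip $K\ast P-P=0$, where what you really need is $K\ast\bar U-P\leqslant 0$).

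The lower bound, however, has a genuine gap, and it lies exactly where you flag point (ii). You choose
\[
\underbar U(t,x)=\eta\Bigl[e^{-\lambda(|x|-c(\lambda)t)}V(\lambda)-b\,e^{-\lambda_1(|x|-c(\lambda_1)t)}V(\lambda_1)\Bigr]_+ ,
\]
so that \emph{both} modes are exact solutions of the linearization $\Phi_t=D(K\ast\Phi-\Phi)+F'(\mathbf 0)\Phi$. On the set where $\underbar U>0$ this forces, after using $K\ast\underbar U\geqslant K\ast\Phi$ and (A3),
\[
\underbar U_t-D(K\ast\underbar U-\underbar U)-F(\underbar U)\ \leqslant\ F'(\mathbf 0)\underbar U-F(\underbar U)\ \leqslant\ M\,\underbar U^{1+\delta_0},
\]
which is nonnegative, not nonpositive: there is \emph{no linear slack at all}. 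Subtracting an exact linear mode cannot absorb anything, because it contributes zero to the linearized operator. Making $\eta$ small shrinks the right-hand side but never changes its sign, so $\underbar U$ is never a subsolution except where it vanishes. Even setting aside the sign issue, your $\underbar U$ is unbounded in time: along its ridge $|x|\approx\rho t$ one has $\gamma(\lambda)-\lambda\rho=\lambda\lambda_1\bigl(c(\lambda)-c(\lambda_1)\bigr)/(\lambda_1-\lambda)>0$, so $\max_x\underbar U_j(t,x)\to\infty$; hence (A3), which requires $\|\underbar U\|\leqslant q_0$, eventually fails, and you cannot repair this by truncating at $P$, since $\min\{P,\cdot\}$ is a supersolution operation, not a subsolution one, in the nonlocal setting.

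The paper's fix, and the idea you are missing, is to put the second mode in the \emph{same} moving frame as the first: take
\[
\underline U(t,x)=\max\bigl\{\mathbf 0,\ \gamma e^{-\lambda z}V(\lambda)-L e^{-\mu z}V(\mu)\bigr\},\qquad z=|x|-c(\lambda)t,\ \ \mu=\lambda(1+\delta),
\]
with $\delta\leqslant\delta_0$ and $\mu<\lambda^*$. Now $e^{-\mu z}V(\mu)$ is \emph{not} an exact linear solution in a frame of speed $c(\lambda)$, and its failure to be one is quantified by $G(c(\lambda),\mu;j)>G(c(\mu),\mu;j)=0$ (using $c(\mu)<c(\lambda)$ from Remark~\ref{re1.2}): this strictly positive slack, multiplied by $L$, is exactly what absorbs the $M\underbar U^{1+\delta_0}$ error. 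Moreover, with both exponentials in the variable $z$, the profile has a fixed shape and a bounded supremum $\gamma e^{-\lambda y_0}\leqslant q_0$, so (A3) applies for all time. There is one more ingredient you would need to make the initial-data comparison work: a preliminary lemma (the paper's first step) showing that after time $1$ the solution already dominates a function of the form $\gamma\min\{e^{-\lambda|x|},e^{-\lambda y_0}\}V(\lambda)$; this uses the positivity propagated by iterated convolutions $K\ast\cdots\ast K\ast U_0$ and cannot be replaced by a spatial shift alone, because \eqref{1.2} only controls $U_0$ for $|x|$ large.
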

\begin{proof}
Let $U=(u_1,\ldots,u_m)$ be the solution of  \eqref{1.1} with initial data $U_0$.
By \eqref{1.2} and \eqref{1.98}, there is a constant $\Gamma>0$ large enough such that
\[
U_0(x)\ll \Gamma e^{-\lambda|x|}V(\lambda).
\]
For $\lambda\in(0,\lambda^*)$, define
\begin{equation}\label{2.97}
\bar U(t,x)=\min\left\{P,~\Gamma e^{-\lambda z}V(\lambda)\right\}\quad\text{with}~z=|x|-c(\lambda)t,~t\geqslant0,~x\in\mathbb R.
\end{equation}
Now we check that $\bar U=(\bar u_1,\ldots,\bar u_m)$ is an upper solution. Let $v_j(\lambda)$ denote the $j$th   component of $V(\lambda)$, namely  $V(\lambda)=(v_1(\lambda),\ldots,v_m(\lambda))$. For any $j\in J$, when $z<\lambda^{-1}\ln (\Gamma v_j(\lambda) / p_j)$, we have that $\bar u_j(t,x)=p_j$. Then by (A1)-(b), from $\bar u_i(t,x)\leqslant p_i$ for any $i\in J$ we can get that
\[
\frac{\partial}{\partial t}\bar u_j- d_jk_j*\bar u_j+d_j\bar u_j-f_j(\bar U) \geqslant-f_j( P)=0.
\]
When $z\geqslant\lambda^{-1}\ln (\Gamma v_j(\lambda) / p_j)$, it holds that  $\bar u_j(t,x)=\Gamma e^{-\lambda z}v_j(\lambda)$.
We denote $f_{j,i}=\frac{\partial}{\partial u_i}f_j(\mathbf 0)$ and  (A1)-(b) implies $f_{j,i}\geqslant0$ for $i\neq j$.
By (A2) and \eqref{2.2}, we  have that
\[
\begin{aligned}
&\frac{\partial}{\partial t}\bar u_j- d_jk_j*\bar u_j+d_j\bar u_j-f_j(\bar U) \\
\geqslant& \Gamma e^{- \lambda z}\Big[\Big(c(\lambda)\lambda - d_j\int_{\mathbb R}k_j(y)e^{\lambda y}dy+d_j\Big)v_j(\lambda)-\sum\limits_{i=1}^{m} f_{j,i}v_i(\lambda)\Big]=0.
\end{aligned}
\]
Thus $\bar U=(\bar u_1,\ldots,\bar u_m)$ is an upper solution of  \eqref{1.1}.
Lemma \ref{le-cp} implies that
\[
U(t,x)\leqslant\bar U(t,x)\leqslant \Gamma e^{-\lambda z}V(\lambda)~\text{for any}~ t\geqslant0~\text{and}~x\in\mathbb R.
\]
Then for any $\varepsilon>0$ and $j\in J$, we have that
\[
\lim\limits_{t\rightarrow+\infty}\sup_{|x|\geqslant (c(\lambda)+\varepsilon) t} u_j(t,x) \leqslant
\lim\limits_{t\rightarrow+\infty}\sup_{|x|\geqslant (c(\lambda)+\varepsilon)t}\Gamma e^{-\lambda(|x|-c(\lambda)t)}v_j(\lambda)\leqslant\lim\limits_{t\rightarrow+\infty}\Gamma e^{-\lambda\varepsilon t}v_j(\lambda)=0.
\]

Now we just need to  prove that for any   $\varepsilon\in(0,c(\lambda))$ and $j\in J$, there exists $\nu>0$ such that
\begin{equation} \label{2.5}
\lim\limits_{t\rightarrow+\infty}\inf_{|x|\leqslant (c(\lambda)-\varepsilon)t}u_j(t,x)\geqslant\nu.
\end{equation}
The proof of \eqref{2.5} consists of  the following two steps.

First, we prove that there exist   two positive constants   $\gamma$ and $y_0$ such that
\begin{equation}\label{2.9}
U(1,x)\geqslant \gamma\min\left\{e^{-\lambda|x|},e^{-\lambda y_0}\right\}V(\lambda),~x\in\mathbb R.
\end{equation}
From \eqref{1.2} it follows that $U_0(x)\gg \mathbf 0$ for sufficiently large $|x|$. Then by (A1)-(d), there exists $N_0\in\mathbb N^+$ such that
\begin{equation}\label{3.99}
\underbrace{K*K*\ldots*K}\limits_{N_0}*U_0(x)\gg\mathbf 0~\text{for any}~x\in\mathbb R.
\end{equation}
For $j\in J$, let $\pi_j:\mathbb R^m\rightarrow \mathbb R^m$ denote the function
\[
\pi_j: (u_1,\ldots,u_m)\mapsto(0,\ldots,u_j,\ldots,0);
\]
namely the $j$th component of $\pi_j(U)$ is $u_j$ while others are zero. We define
\begin{equation}\label{2.7}
b_j=\inf\limits_{u_j\in(0,p_j]} \{f_j(\pi_j(U))/u_j \}.
\end{equation}
Let $n\in\mathbb N^+$ and we divide equally  the time period of $[0,\tau]$  into $n$ parts, namely $[0,\tau/n]$, $[\tau/n, 2\tau/n]$,$\ldots$, and $[(n-1)\tau/n, \tau]$.
In $[0,\tau/n]$, we consider
\[
\underline{W} (t,x)=(\underline {w}_1(t,x),\dots,\underline {w}_m(t,x)),~t\in [0,\tau/n],~x\in\mathbb R,
\]
where
\begin{equation}\label{2.96}
\underline {w}_j(t,x)=M_j\big[u_{j,0}(x)+td_jk_j*u_{j,0}(x)\big]e^{(b_j-d_j)t},~j\in J\\
\end{equation}
and
\[
M_j=(1+d_j\tau/n)^{-1}(1+e^{(b_j-d_j)\tau/n})^{-1},~j\in J.
\]
It is easy to check that
\[
\partial_t\underline{w}_j-d_jk_j*\underline w_j+d_j\underline w_j-b_j\underline w_j\leqslant0\quad\text{for}~j\in J.
\]
For $t\in[0,\tau/n]$,  by $u_{j,0}(x)\leqslant p_j$ we have that
\[
\underline {w}_j(t,x)\leqslant M_j p_j[1+d_j\tau/n]e^{(b_j-d_j)\tau/n}\leqslant p_j \quad\text{for}~x\in\mathbb R.
\]
From (A1)-(b)  and \eqref{2.7}, it follows  that
\[
\begin{aligned}
&\underline W_t-DK*\underline W+D\underline W-F(\underline W)\\
\leqslant &~\underline W_t-DK*\underline W+D\underline W-(f_1(\pi_1(\underline W)),\ldots,f_m(\pi_m(\underline W)))\\
\leqslant &~\underline W_t-DK*\underline W+D\underline W-\text{diag}\{b_1,\ldots,b_m\}\underline W\leqslant\mathbf 0.
\end{aligned}
\]
By $\underline{W} (0,x)\leqslant U_0(x)$ for $x\in\mathbb R$,  from  Lemma \ref{le-cp} we get that
\begin{equation}\label{2.8}
U(\tau/n,x)\geqslant \underline W(\tau/n,x).
\end{equation}
Denote $C_1\triangleq\min\limits_{j\in J}\{M_j d_j e^{(b_j-d_j)\tau/n}\tau/n\}$ and then
\[
U(\tau/n,x)\geqslant C_1 K*U_0(x).
\]
Repeat this argument for  $t\in[\tau/n,2\tau/n]$ and substitute $K*U_0(x)$ for  $U_0(x)$. We can find a constant  $C_2>0$ such that
\[
U(2\tau/n,x)\geqslant C_2 K*K*U_0(x).
\]
Similarly, there exists  $C_n>0$ such that
\[
U(\tau,x)\geqslant C_n \underbrace{K*K*\ldots*K}\limits_{n}*U_0(x)~\text{for}~x\in\mathbb R.
\]
When $n=N_0$, it follows from \eqref{3.99} that $U(\tau,x)\gg\mathbf 0$.
When  $n=1$, we get   from \eqref{2.96} and \eqref{2.8}   that
\[
U(\tau,x)\geqslant \underline W(\tau,x)\geqslant C_\tau U_0(x)~\text{with}~C_\tau=\min\limits_{j\in J}\{M_j e^{(b_j-d_j)\tau}\}.
\]
Then for any $\tau>0$ there exists $C_\tau>0$ such that
\begin{equation}\label{2.6}
U(\tau,x)\gg\mathbf 0~\text{and}~U(\tau,x)\geqslant C_\tau U_0(x)~\text{for}~x\in\mathbb R.
\end{equation}
When $\tau=1$,  by \eqref{1.2} with $\lambda_j=\lambda$ we can find  $\gamma>0$ and $y_0>0$ satisfying \eqref{2.9}.

Let $\gamma$ be smaller (if necessary) such that  $\gamma e^{-\lambda y_0}\leqslant q_0$, where $q_0$ is given by assumption (A3).
Define $W_0(x)=\gamma\min\left\{e^{-\lambda|x|},e^{-\lambda y_0}\right\}V(\lambda)$, $x\in\mathbb R$. Then $\|W_0(x)\|\leqslant q_0$ for $x\in\mathbb R$ and
\begin{equation}\label{2.9}
U(1,x)\geqslant W_0(x)=
\left\{
\begin{aligned}
&\gamma e^{-\lambda|x|}V(\lambda)~&&\text{for}~|x|\geqslant y_0,\\
&\gamma e^{-\lambda y_0}V(\lambda)~&&\text{for}~|x|\leqslant y_0,
\end{aligned}
\right.
\end{equation}
Let $W(t,x)$ be the solution of \eqref{1.1} with initial data $W(0,x)=W_0(x)$. Then we get from Lemma \ref{le-cp} that
\begin{equation}\label{2.10}
U(t+1,x)\geqslant W(t,x)~\text{for}~t\geqslant 0,~x\in\mathbb R.
\end{equation}
Since $W_0(\cdot)$ is symmetric  and decreasing on $\mathbb R^+$, so is $W(t,\cdot)$ by Lemma \ref{th1.1}.

Second,  we construct a lower solution  and prove \eqref{2.5}.
Now define some nations. By Remark \ref{re1.2}, for any $\lambda\in(0,\lambda^*)$, there is a constant $\delta_\lambda=\lambda^*/\lambda-1>0$ such that
\[
c(\lambda+\lambda s)<c(\lambda)~\text{for any}~s\in(0,\delta_\lambda).
\]
Denote
\[
\mu=\lambda(1+\delta)>0\quad\text{with}\quad\delta\triangleq\min\{\delta_0,\delta_\lambda/2\}>0,
\]
where the positive constant $\delta_0$ is given by (A3). Then it follows that
\begin{equation}\label{2.11}
c(\mu)< c(\lambda).
\end{equation}
For $j\in J$, let $G(c,\lambda;j)$ be the $j$th component of the vector $c\lambda V(\lambda)-\mathcal K(\lambda)V(\lambda)$; namely
\[
G(c,\lambda;j)\triangleq\left(c\lambda - d_j\int_{\mathbb R}k_j(y)e^{\lambda y}dy+d_j\right)v_j(\lambda)-\sum\limits_{i=1}^{m} f_{j,i}v_i(\lambda),\quad c>0,~\lambda>0,
\]
where $f_{j,i}=\frac{\partial}{\partial u_i}f_j(\mathbf 0)$ and $v_j(\lambda)$ is the $j$th   component of $V(\lambda)\gg\mathbf 0$.
For $\lambda\in(0,\lambda^*)$, it follows from \eqref{2.2} that
\begin{equation}\label{2.12}
G(c(\lambda),\lambda;j)=\left(c(\lambda)\lambda - d_j\int_{\mathbb R}k_j(y)e^{\lambda y}dy+d_j\right)v_j(\lambda)-\sum\limits_{i=1}^{m} f_{j,i}v_i(\lambda)=0.
\end{equation}
By \eqref{2.11} we get that
\begin{equation}\label{2.13}
\begin{aligned}
G(c(\lambda),\mu;j)&=\left(c(\lambda)\mu - d_j\int_{\mathbb R}k_j(y)e^{\mu y}dy+d_j\right)v_j(\mu)-\sum\limits_{i=1}^{m} f_{j,i}v_i(\mu)\\
&>G(c(\mu),\mu;j)=0.
\end{aligned}
\end{equation}

For $\lambda\in(0,\lambda^*)$, we define  $\underline U=(\underline u_1,\ldots,\underline u_m)$ as follows
\[
\underline U(t,x)=\max\left\{\mathbf 0,~\gamma e^{- \lambda z}V(\lambda)-Le^{-\mu z}V(\mu)\right\}~\text{with}~z=|x|-c(\lambda)t,~t\geqslant0,~x\in\mathbb R,
\]
where $L$ is a positive constant  large enough such that
\begin{equation}\label{2.14}
L\geqslant\max\left\{\frac{\gamma e^{\lambda\delta y_0}}{1+\delta}\max\limits_{j\in J}\left(\frac{v_j(\lambda)}{v_j(\mu)}\right),~M\gamma^{1+\delta}\max\limits_{j\in J}\left(\frac{v_j^{1+\delta}(\lambda)}{G(c(\lambda),\mu;j)}\right)\right\}.
\end{equation}
Denote
\[
y_j=\lambda^{-1}\delta^{-1}\ln\left(\frac{L(1+\delta)v_j(\mu)}{\gamma v_j(\lambda)}\right)~\text{and}~ z_j=\lambda^{-1}\delta^{-1}\ln\left(\frac{Lv_j(\mu)}{\gamma v_j(\lambda)}\right)\quad\text{for}~ j\in J.
\]
Then $y_j>z_j$ for any $j\in J$.
Note that $y_j$ and $z_j$ correspond respectively to the maximum point  of    $z\mapsto \gamma e^{- \lambda z}v_j(\lambda)-Le^{-\mu z}v_j(\mu)$ and the root of  $\gamma e^{- \lambda z}v_j(\lambda)-Le^{-\mu z}v_j(\mu)=0$,  that is
\begin{equation}\label{3.96}
\underline u_j(t,x)=\left\{
\begin{aligned}
&0,&&\text{when}~z<z_j,\\
&\gamma e^{- \lambda z}v_j(\lambda)-Le^{-\mu z}v_j(\mu)=0,&&\text{when}~z=z_j,\\
& \gamma e^{- \lambda z}v_j(\lambda)-Le^{-\mu z}v_j(\mu)>0,&&\text{when}~z>z_j,\\
\end{aligned}
\right.
\end{equation}
and
\[
\max\limits_{z\in\mathbb R}\{\gamma e^{- \lambda z}v_j(\lambda)-Le^{-\mu z}v_j(\mu)\}=\gamma e^{- \lambda y_j}v_j(\lambda)-Le^{-\mu y_j}v_j(\mu)>0.
\]
From \eqref{2.14}, it follows  that $y_j\geqslant y_0$ for any $j\in J$. Then we have that
\begin{equation}\label{2.16}
\sup\limits_{t\geqslant0,x\in\mathbb R}\underline u_j(t,x)=\underline u_j(t,c(\lambda)t+y_j)=\gamma e^{-\lambda y_j}v_j(\lambda)-Le^{-\mu y_j}v_j(\mu)\leqslant \gamma e^{-\lambda y_0}v_j(\lambda).
\end{equation}
Since $V(\lambda)$ is a  unit vector, it holds that
\begin{equation}\label{2.18}
\|\underline U(t,x)\|\leqslant  \gamma e^{-\lambda y_0}\leqslant q_0\quad\text{for any}~t\geqslant0,~x\in\mathbb R.
\end{equation}
Particularly, when $t=0$, it follows  from \eqref{2.16} that $\underline U(0,x)\leqslant \gamma e^{-\lambda y_0}V(\lambda)$ for  any $x\in\mathbb R$. The definition of $\underline U(t,x)$ implies  that  $\underline U(0,x)\leqslant\gamma e^{-\lambda|x|}V(\lambda)$ for   $x\in\mathbb R$.
Then we get from  \eqref{2.9} that
\begin{equation}\label{2.17}
  \underline U(0,x)\leqslant W_0(x)~\text{for}~x\in\mathbb R.
\end{equation}

In order to verify $\underline U(t,x)$ is a lower solution, namely
\[
\underline U_t- DK*\underline U+D\underline U-F(\underline U)\leqslant\mathbf0,
\]
we check it holds for each  component.
For any $j\in J$, when  $z<z_j$, since $\underline u_j(t,x)= 0$, it is easy to check that
\[
\frac{\partial}{\partial_t}\underline u_j - d_jk_j*\underline u_j+d_j\underline u_j-f_j(\underline U)\leqslant 0.
\]
When $z\geqslant z_j$, we get that
\[
\begin{aligned}
&\underline u_j(t,x)=\gamma e^{-\lambda z}v_j(\lambda)-Le^{-\mu z}v_j(\mu),\\
&\underline u_i(t,x)\geqslant\gamma e^{-\lambda z}v_i(\lambda)-Le^{-\mu z}v_i(\mu)~\text{for}~ i\neq j.
\end{aligned}
\]
From (A3) and \eqref{2.18}, it follows that
\[
\begin{aligned}
f_j(\underline U)
&\geqslant \sum\limits_{i=1}^{m} f_{j,i}\underline u_i(t,x)-M\underline u_j^{1+\delta}(t,x)\\
&\geqslant \sum\limits_{i=1}^{m} f_{j,i}\left[\gamma e^{-\lambda z}v_i(\lambda)-Le^{-\mu z}v_i(\mu)\right]-M\gamma^{1+\delta} e^{-\mu z}v_j^{1+\delta}(\lambda).
\end{aligned}
\]
Then some calculations show that
\[
\begin{aligned}
&\frac{\partial}{\partial t}\underline u_j- d_jk_j*\underline u_j+d_j\underline u_j-f_j(\underline U)\\
\leqslant&\gamma e^{- \lambda z}\Big[\Big(c(\lambda)\lambda - d_j\int_{\mathbb R}k_j(y)e^{\lambda y}dy+d_j\Big)v_j(\lambda)-\sum\limits_{i=1}^{m} f_{j,i}v_i(\lambda)\Big]\\
&\quad-L e^{-\mu z}\Big[\Big(c(\lambda)\mu - d_j\int_{\mathbb R}k_j(y)e^{\mu y}dy+d_j\Big)v_j(\mu)-\sum\limits_{i=1}^{m} f_{j,i}v_i(\mu)\Big]+M\gamma^{1+\delta} e^{-\mu z}v_j^{1+\delta}(\lambda)\\
=&\gamma e^{-\lambda z}G(c(\lambda),\lambda;j)- e^{-\mu z}\left[LG(c(\lambda),\mu;j)-M\gamma^{1+\delta} v_j^{1+\delta}(\lambda)\right].
\end{aligned}
\]
By \eqref{2.12}, \eqref{2.13}, and \eqref{2.14}, for $z\geqslant z_j$, we have
\[
\frac{\partial}{\partial t}\underline u_j- d_jk_j*\underline u_j+d_j\underline u_j-f_j(\underline U)\leqslant0.
\]
Therefore, $\underline U(t,x)$ is a lower solution.

Lemma \ref{le-cp} and   \eqref{2.17} imply that
\[
W(t,x)\geqslant \underline U(t,x)~\text{for any}~t\geqslant0,~x\in\mathbb R.
\]
Let $y_{\text{max}}\triangleq\max\limits_{j\in J}\{y_j\}$.  It follows from $y_j>z_j$ that $y_{\text{max}}>\max\limits_{j\in J}\{z_j\}$, which implies by \eqref{3.96} that
\[
\nu\triangleq\min\limits_{j\in J}\{\underline u_j(t,c(\lambda)t+y_{\text{max}})\}>0.
\]
We denote $W(t,x)$ by $(w_1(t,x),\ldots,w_m(t,x))$. Then  it follows   that
\[
 w_j(t,c(\lambda)t+y_{\text{max}})\geqslant\underline  u_j(t,c(\lambda)t+y_{\text{max}})\geqslant\nu~\text{for any}~t\geqslant0~\text{and}~j\in J.
\]
Since $W(t,\cdot)$ is   symmetric  and decreasing on $\mathbb R^+$, it holds that
\[
w_j(t,x)\geqslant \nu ~\text{for any}~|x|\leqslant c(\lambda)t+y_{\text{max}}~\text{and}~j\in J.
\]
By \eqref{2.10} we get that
\[
u_j(t+1,x)\geqslant \nu ~\text{for any}~|x|\leqslant c(\lambda)t+y_{\text{max}}~\text{and}~j\in J,
\]
which implies \eqref{2.5}.  This completes the proof of Proposition \ref{th2.1}.
\end{proof}

\section{General case}

In this section, we give the proof of  Theorem \ref{th1.3}. By constructing a lower solution,  we transform the proof for the  general case where $U_0$ satisfies \eqref{1.2} into the   special cases in Section  3, where all components of the initial data have the same decay rate.

\begin{proof}[Proof of Theorem \ref{th1.3}]
The strictly decreasing property of $c(\lambda_0)$  with respect to $\lambda_0\in(0,\lambda^*)$ has been  obtained in Remark \ref{re1.2}.
By \eqref{1.2} and $\lambda_0\leqslant\lambda_j$ for $j\in J$,
there exists $C>0$ such that $u_{j,0}(x)\leqslant C e^{-\lambda_0|x|}$ for $j\in J$ and large $|x|$.
Then the proof of
\[
\lim\limits_{t\rightarrow+\infty}\sup_{|x|\geqslant (c(\lambda_0)+\varepsilon) t}u_j(t,x)=0\quad\text{for}~j\in J
\]
is similar to the counterpart in the proof of Propositions \ref{th2.1}, and we only need to substitute $\lambda_0$ for $\lambda$.

Now prove that for any  $\varepsilon\in(0,c(\lambda_0))$, there is a constant $\nu>0$ such that
\[
\lim\limits_{t\rightarrow+\infty}\inf_{|x|\leqslant (c(\lambda_0)-\varepsilon)t}u_j(t,x)\geqslant\nu~\text{for any}~ j\in J.
\]
From the proof  of  Propositions \ref{th2.1},   we only need to prove that there exist  $T>0$ and $M_0>0$ such that
\begin{equation}\label{4.1}
u_j(T,x)\geqslant M_0 p(x),~x\in\mathbb R\quad~\text{for any}~ j\in J,
\end{equation}
where
\[
p(x)=e^{-\lambda_0|x|}.
\]

Now we reorder the equations in the system \eqref{1.1} (namely, reorder the components of $U$).
Define
\[
f_{j,i}=\frac{\partial}{\partial u_i}f_j(\mathbf 0).
\]
Choose the component who has the smallest decay rate as the first component $u_1$ of $U$, and then $\lambda_1=\lambda_0=\min\{\lambda_j,j\in J\}$. Since $F'(0)$ is irreducible, we can choose   the second  component $u_2$ such that $f_{2,1}>0$.  Similarly, we can choose the third  component   $u_3$ satisfying $f_{3,1}>0$ or $f_{3,2}>0$.
Repeat this process, we   reorder the components of $U$ satisfying that
for any $i\in \{2,3,\ldots,m\}$ there exists $j\in\{1,2,\ldots,i-1\}$ such that $f_{i,j}>0$.

We give   an important inequality. Since  $F\in C^1[\mathbf{0},P])$, by (A1)-(b), we can find  a constant  $q_3>0$ such that
\begin{equation}\label{4.2}
f_j(U)\geqslant (f_{j,j}-1)u_j+\frac{1}{2}\sum\limits_{i\neq j}f_{j,i}u_i~~~\text{for any}~j\in J~\text{and}~ U\in [\mathbf 0, q_3\mathbf 1].
\end{equation}

In order to prove \eqref{4.1}, we need to construct a lower solution
\[
W(t,x)=(w_1(t,x),\ldots,w_m(t,x))\in [\mathbf 0, q_3\mathbf 1],\quad t\geqslant1,~x\in\mathbb R.
\]
The form of $W(t,x)$ will be given for every component.
First, we construct the first component $w_{1}(t,x)$ of  $W(t,x)$. By \eqref{2.6} and  \eqref{1.2} with $\lambda_1=\lambda_0$, there is  a constant $C_0\in(0,q_3]$ such  that
\[
u_{1}(1,x)\geqslant C_0 p(x)\quad~\text{for}~x\in\mathbb R.
\]
Let
\[
w_{1}(t,x)= M_1 e^{-\alpha (t-1)}p(x)~~\text{for}~t\geqslant 1,~x\in\mathbb R,
\]
where $M_1$ is a constant  in $(0,C_0]$  and
\[
\alpha\geqslant\max\limits_{j\in J}\{d_j+|f_{j,j}|\}+2.
\]
Note that $M_1$ will be reselect as a smaller constant later.
It is easy to check that
\[
w_{1}(t,x)\leqslant M_1 \leqslant C_0\leqslant q_3\quad\text{for}~t\geqslant 1,~x\in\mathbb R
\]
and
\begin{equation}\label{4.3}
w_{1}(1,x)\leqslant M_1p(x)\leqslant u_{1}(1,x)~\text{for}~x\in \mathbb R.
\end{equation}
From $p(x)\geqslant0$, it follows that  $k_{1}*w_{1}\geqslant0$. By   the cooperation of $F$ and \eqref{4.2}, we have that $f_{1}(W)\geqslant (f_{1,1}-1)w_{1}$ for  $W\in [\mathbf 0, q_3\mathbf 1]$.  Then some calculations show that
\[
\begin{split}
& \frac{\partial}{\partial t} w_{1}-d_{1}k_{1}*w_{1}+d_{1}w_{1}-f_{1}(W)\\
\leqslant&~M_1(-\alpha+d_{1}-f_{1,1}+1)e^{-\alpha(t-1)}p(x)\leqslant 0.
\end{split}
\]

Second, we construct the second component $w_{2}(t,x)$ of  $W(t,x)$ under the condition  $f_{2,1}>0$.
Define
\[
w_{2}(t,x)=M_2\left(e^{-\beta_2(t-1)}-e^{-\alpha (t-1)}\right)p(x)\quad\text{for}~t\geqslant 1,~x\in\mathbb R,
\]
where
\[
\beta_2=d_{2}+|f_{2,2}|+1,\quad M_2\triangleq\frac{f_{{2},{1}}M_1}{2(\alpha-d_{2}+f_{2,2}-1)}.
\]
By  $\alpha\geqslant\beta_2$, we get that $w_{2}\geqslant0$ for $t\geqslant1$,  which implies that $k_{2}*w_{2}\geqslant0$.
Let $M_1$ be smaller (if necessary) satisfying  $M_1\leqslant 2q_3/f_{{2},{1}}$. From $\alpha\geqslant d_{2}-f_{2,2}+2$, it follows that
\[
w_{2}(t,x)\leqslant M_2\leqslant \frac{1}{2}f_{{2},{1}}M_1\leqslant q_3~\text{for}~t\geqslant 1,~x\in\mathbb R.
\]
Assumption  (A1)-(b) and \eqref{4.2} show that $f_{2}(W)\geqslant (f_{2,2}-1)w_{2}+\frac{1}{2}f_{2,1}w_{1}$ for  $W\in [\mathbf 0, q_3\mathbf 1]$.
Then we have  that
\[
\begin{split}
& \frac{\partial}{\partial t} w_{2}-d_{2}k_{2}*w_{2}+d_{2}w_{2}-f_{2}(W)\\
\leqslant&~\frac{\partial}{\partial t} w_{2}-d_{2}k_{2}*w_{2}+d_{2}w_{2}-(f_{2,2}-1)w_{2}-\frac{1}{2}f_{2,1}w_{1}\\
\leqslant&~M_2\left[(-\beta_2+d_{2}-f_{2,2}+1)e^{-\beta_2(t-1)}+(\alpha-d_{2}+f_{2,2}-1)e^{-\alpha(t-1)}\right]p(x)\\
&\quad-\frac{1}{2}M_1f_{2,1}e^{-\alpha(t-1)}p(x)\\
\leqslant&~\Big[M_2(\alpha-d_{2}+f_{2,2}-1)-\frac{1}{2}M_1 f_{2,1}\Big]e^{-\alpha(t-1)}p(x)=0.
\end{split}
\]
Moreover,  it is easy to check that
\[
w_{2}(1,x)=0~\text{for}~x\in\mathbb R.
\]
Note that $e^{-\beta_2s}\geqslant 2e^{-\alpha s}$ for $s\geqslant \tau\triangleq \ln 2$ and then
\begin{equation}\label{4.4}
w_{2}(t,x)\geqslant M_2e^{-\alpha (t-1)}p(x)\quad\text{for}~t\geqslant 1+\tau,~x\in\mathbb R,
\end{equation}
which is a key inequality for the construction of $w_j$ with $j>2$ when $f_{j,2}>0$.

Third, we construct the third  component $w_{3}(t,x)$ of  $W(t,x)$ under the condition  $f_{3,1}>0$ or $f_{3,2}>0$. For the case $f_{3,1}>0$, we can construct  $w_{3}(t,x)$ by the same method as $w_{2}(t,x)$. For the case $f_{3,2}>0$, we define
\[
w_{3}(t,x)=\left\{
\begin{array}{ll}
0,&1\leqslant t\leqslant 1+\tau,\\
M_3\Big(e^{-\beta_3(t-1-\tau)}-e^{-\alpha (t-1-\tau)}\Big)p(x),&t\geqslant 1+\tau,
\end{array}
\right.
\]
where
\[
\beta_3=d_{3}+|f_{3,3}|+1,\quad M_3\triangleq\frac{f_{{3},{2}}M_2}{2e^{\alpha \tau}(\alpha-d_{3} +f_{3,3} -1)}.
\]
Let $M_1$ be  smaller   (if necessary) such that $M_3\leqslant q_3$, and then $0\leqslant w_3(t,x)\leqslant q_3$ for $t\geqslant 1$,~$x\in\mathbb R$.
By \eqref{4.2} and \eqref{4.4}, we have that
\[
f_{3}(W)\geqslant (f_{3,3}-1)w_3+\frac{f_{3,2}}{2}w_2\geqslant (f_{3,3}-1)w_3+\frac{f_{3,2}}{2} M_2e^{-\alpha (t-1)}p(x)\quad\text{for}~W\in [\mathbf 0, q_3\mathbf 1].
\]
Following  similar calculations to these for  $w_{2}$, we can prove that
\[
\frac{\partial}{\partial t} w_{3}-d_{3}k_{3}*w_{3}+d_{3}w_{3}-f_{3}(W)\leqslant 0.
\]
We also have that
\[
w_{3}(t,x)\geqslant M_3e^{-\alpha (t-1-\tau)}p(x)\quad\text{for}~t\geqslant 1+2\tau,~x\in\mathbb R.
\]
which provide the key inequality for the   construction of   $w_j$ with $j>3$ when $f_{j,3}>0$.

To construct  $w_j$ for $j\in\{4,5,\ldots,m\}$, when $f_{j,1}>0$, we apply the  construction method for $w_2$,
and when $f_{j,i}>0$ for some $i\in\{2,\ldots,j-1\}$,  we use the construction method for $w_3$ in the case $f_{3,2}>0$.
Then we can define every component of $W(t,x)$ satisfying
\[
\frac{\partial}{\partial t}W-DK*W+DW-F(W)\leqslant \mathbf 0\quad~\text{for}~t\geqslant1,~x\in\mathbb R,\\
\]
and
\[
w_{i}(t,x)\geqslant M_{i}e^{-\alpha [t-1-(i-2)\tau]}p(x)\quad\text{for}~t\geqslant1+(i-1)\tau,~x\in\mathbb R,~i=2,\ldots,m.
\]
We obtain  two constants
\[
T=1+(m-1)\tau\quad \text{and}~~ M_0=\left\{M_1 e^{-\alpha (T-1)},\min\limits_{i\in\{2,\ldots,m\}}\left\{M_i e^{-\alpha [T-1-(i-2)\tau]}\right\}\right\}
\]
such that
\begin{equation}\label{4.5}
w_{i}(T,x)\geqslant M_0 p(x),~x\in\mathbb R~\quad~\text{for any}~ i=1,\ldots,m.
\end{equation}
The definition of $W$  also shows that $w_{i}(1,x)=0$ for any $i=2,\ldots,m$.  We get from \eqref{4.3} that
\[
U(1,x)\geqslant W(1,x),~~~~x\in\mathbb R.
\]
It follows from  Lemma \ref{le-cp}   that
\[
U(t,x)\geqslant W(t,x)\quad\text{for}~t\geqslant1,~x\in\mathbb R.
\]
Then we have   $U(T,x)\geqslant W(T,x)$ for $x\in\mathbb R$, which implies \eqref{4.1} by \eqref{4.5}. It  completes the proof  of Theorem \ref{th1.3}.
\end{proof}

\section*{Acknowledgments}

The research of R. Hou was partially supported by China Postdoctoral Science Foundation funded project (BX20200011).
The research of Z.A. Wang was partially supported by an internal grant UAH0 (project id: P0031504) from the Hong Kong Polytechnic University.  The research of W.-B. Xu was partially supported by China Postdoctoral Science Foundation funded project (2020T130679) and by the CAS AMSS-POLYU Joint Laboratory of Applied Mathematics postdoctoral fellowship scheme.
The research of Z. Zhang  was partially supported by National Natural Science Foundation of China (11771428, 12026217, 12031015).

\end{document}